\newtheorem{theorem}{Theorem}[section]
\newtheorem{lemma}[theorem]{Lemma}
\newtheorem{corollary}[theorem]{Corollary}
\newtheorem{question}[theorem]{Question}
\theoremstyle{definition}
\newtheorem{definition}[theorem]{Definition}
\numberwithin{equation}{section}
\newcommand\supp{{\rm spt}}
\newcommand\res{\mathop{\hbox{\vrule height 7pt width .3pt depth 0pt
\vrule height .3pt width 5pt depth 0pt}}\nolimits}
\newcommand{\cH}{{\mathcal{H}}}
\def\a#1{\left\llbracket{#1}\right\rrbracket}
\newcommand\breg{{\rm Reg_b}}
\newcommand\bsing{{\rm Sing_b}}
\newcommand\N{{\mathbb N}}
\newcommand\R{{\mathbb R}}
\newcommand{\bB}{{\mathbf B}}
\newcommand{\qhalf}{\left(Q-{\textstyle \frac12}\right)}
\DeclareMathAlphabet{\mathpzc}{OT1}{pzc}{m}{it}
\newcommand{\gammaup}{\Gamma}
\newcommand{\gammado}{\gamma}
\def\XXint#1#2#3{{\setbox0=\hbox{$#1{#2#3}{\int}$ }
\vcenter{\hbox{$#2#3$ }}\kern-.6\wd0}}
\title[Boundary regularity of mass minimizers]{Boundary regularity of mass-minimizing integral currents and a question of Almgren}
\author{Camillo De Lellis}
\address{Institut f\"ur Mathematik, Universit\"at Z\"urich CH-8057 Z\"urich, Switzerland}
\email{camillo.delellis@math.uzh.ch}
\author{Guido de Philippis}
\address{SISSA Via Bonomea 265, I-34136 Trieste, Italy}
\email{gdephili@sissa.it}
\author{Jonas Hirsch}
\address{SISSA Via Bonomea 265, I-34136 Trieste, Italy}
\email{jonas.hirsch@sissa.it}
\author{Annalisa Massaccesi}
\address{Dipartimento di Informatica, Universit\`a di Verona Strada le Grazie 15, I-37134 Verona, Italy}
\email{annalisa.massaccesi@math.uzh.ch}
\begin{document}

\maketitle

\begin{abstract}
This short note is the announcement of a forthcoming work in which we prove a first general boundary regularity result for area-minimizing currents in higher codimension, without any geometric assumption on the boundary, except that it is an embedded submanifold of a Riemannian manifold, with a mild amount of smoothness ($C^{3, a_0}$ for a positive $a_0$ suffices). Our theorem allows to answer a question posed by Almgren at the end of his Big Regularity Paper. In this note we discuss the ideas of the proof and we also announce a theorem which shows that the boundary regularity is in general weaker that the interior regularity.
Moreover we remark an interesting elementary byproduct on boundary monotonicity formulae. 
\end{abstract}

\section{Introduction}

Consider a smooth complete Riemannian manifold $\Sigma$ of dimension $m+\bar n$ and a smooth closed submanifold $\gammaup \subset \Sigma$ of dimension $m-1$ which is a boundary in integral homology. Since the pioneering work of Federer and Fleming (cf. \cite{FF}) we know that $\gammaup$ bounds an integer rectifiable current $T$ in $\Sigma$ which minimizes the mass among all integer rectifiable currents bounded by $\gammaup$. 

In general, consider an open $U\subset \Sigma$ and a submanifold $\gammaup\subset \Sigma$ which has no boundary in $U$. If $T$ is an integral current in $U$ with $\partial T \res U= \a{\gammaup}\res  U$ we say that $T$ is mass-minimizing if 
\[
\mathbf{M} (T+\partial S) \geq \mathbf{M} (T)
\]
for every integral current $S$ in $U$. 

Starting with the pioneering work of De Giorgi (see \cite{DG}) and thanks to the efforts of several mathematicians in 
the sixties and the seventies (see \cite{Fleming, DeGiorgi5,Alm2,Simons}), it is known that, if $\Sigma$ is of class $C^{2,a_0}$ for some $a_0>0$, in codimension $1$ (i.e., when $\bar n =1$) and away from the boundary $\gammaup$, $T$ is a smooth submanifold except for a relatively closed set of Hausdorff dimension at most $m-7$. Such set, which from now on we will call {\em interior singular set}, is indeed $(m-7)$-rectifiable (cf. \cite{Simon2}) and it has been recently proved that it must have locally finite Hausdorff $(m-7)$-dimensional measure (see \cite{NV}). 
In higher codimension, namely when $\bar n =2$, Almgren proved in a monumental work (known as Almgren's Big Regularity Paper \cite{Alm}) that, if $\Sigma$ is of class $C^5$, then the interior singular set of $T$ has Hausdorff dimension at most $m-2$. In a series of papers (cf. \cite{DS1,DS2,DS3,DS4,DS5}) the first author and Emanuele Spadaro have revisited Almgren's theory introducing several new ideas which simplify his proof considerably. Furthermore, the first author together with Spadaro and Spolaor, in \cite{DSS1,DSS2,DSS3,DSS4} applied these sets of ideas to establish a complete proof of Chang's interior regularity results for \(2\) dimensional mass-minimizing 
currents \cite{Chang}, showing that in this case interior  singular points are isolated. 

\medskip

Both in codimension one and in higher codimension the interior regularity theory described above is, in terms of dimensional bounds for the singular set, optimal (cf. \cite{BDG} and \cite{Fed}). In the case of boundary points the situation is instead much less satisfactory. The first boundary regularity result is due to Allard who, in his Ph.D. thesis (cf. \cite{AllPhD}), proved that, if $\Sigma = \mathbb R^{m+\bar n}$ and $\gammaup$ is lying on the boundary of a uniformly convex set, then for every point $p\in \gammaup$ there is a neighborhood $W$ such that $T\res W$ is a classical oriented submanifold (counted with multiplicity $1$) whose boundary (in the usual sense of differential topology) is $\gammaup \cap W$. In his later paper \cite{AllB} Allard developed a more general boundary regularity theory from which he concluded the above result as a simpler corollary. 

When we drop the ``convexity assumption'' described above, the same conclusion cannot be reached.
Let for instance $\gammaup$ be the union of two concentric circles $\gamma_1$ and $\gamma_2$ which are contained in a given $2$-dimensional plane $\pi_0\subset \mathbb R^{2+\bar n}$ and have the same orientation. Then the area-minimizing current $T$ in $\mathbb R^{2+\bar n}$ which bounds $\gammaup$ is unique and it is the sum of the two disks bounded by $\gamma_1$ and $\gamma_2$ in $\pi_0$, respectively. At every point $p$ which belongs to the inner circle the current $T$ is ``passing'' through the circle while the multiplicity jumps from $2$ to $1$.
However it is natural to consider such points as ``regular'', motivating therefore the following definition.

\begin{definition}\label{def:reg_points}
A point $x\in\gammaup$ is a regular point for $T$ if there exist a neighborhood $W\ni x$ and a regular $m$-dimensional connected submanifold $\Sigma_0\subset W\cap\Sigma$ (without boundary in $W$) such that $\supp (T)\cap W\subset \Sigma_0$. The set of such points will be denoted by $\breg(T)$ and its complement in $\gammaup$ will be denoted by $\bsing(T)$.
\end{definition}

By the Constancy Lemma, if $x\in \gammaup$ is a regular point, if $\Sigma_0$ is as in Definition \ref{def:reg_points} and if the neighborhood $W$ is sufficiently small, then the following holds:
\begin{itemize}
\item $\gammaup\cap W$ is necessarily contained in $\Sigma_0$ and divides it in two disjoint regular submanifolds $\Sigma_0^+$ and $\Sigma_0^-$ of $W$ with boundaries $\pm\gammaup$;
\item there is a positive 
$Q\in\N$  such that $T\res V=Q\a{\Sigma_0^+}+(Q-1)\a{\Sigma_0^-}$. 
\end{itemize}
We define the density of such points $p$ as $Q-\frac 12$ and we denote it by $\Theta(T,p)=Q-\frac 12$. 

If the density is $\frac{1}{2}$ then the point fulfills the conclusions of Allard's boundary regularity theorem and $\Sigma_0$ is not uniquely determined: the interesting geometrical object is $\Sigma_0^+$ and any smooth ``extension'' of it across $\gammaup$ can be taken as $\Sigma_0$. On the other hand for $Q\geq 2$ the local behavior of the current is similar
to the example of the two circles above: it is easy to see that $\Sigma_0$ is uniquely determined and that it has mean curvature zero.

\medskip
When the codimension of the area-minimizing current is $1$, Hardt and Simon proved in \cite{HS} that the set of boundary singular points is empty, hence solving completely the boundary regularity problem when $\bar n =1$ (although the paper \cite{HS} deals only with the case $\Sigma = \mathbb R^{m+\bar n}$, its extension to a general Riemannian ambient manifold should not cause real issues). In the case of general codimension and general $\gammaup$, Allard's theory implies the existence of (relatively few) boundary regular points only in special ambient manifolds $\Sigma$: for instance when $\Sigma = \mathbb R^{m+\bar n}$ we can recover the regularity of the ``outermost'' boundary points $q\in \gammaup$
(i.e., those points $q$ where $\gammaup$ touches the smallest closed ball which contains it). According to the existing literature, however, we cannot even exclude that the set of regular points is empty when $\Sigma$ is a closed Riemannian manifold. 
In the last remark of the last section of his Big Regularity Paper, cf. \cite[Section 5.23, p. 835]{Alm}, Almgren states the following open problem,
which is closely related to the discussion carried above.

\begin{question}[Almgren]\label{q:alm}
``I do not know if it is possible that the set of density $\frac{1}{2}$ points is empty when $U= \Sigma$ and $\gammaup$ is connected.''
\end{question}

The interest of Almgren in Question \ref{q:alm} is motivated by an important geometric conclusion: in \cite[Section 5.23]{Alm} he shows that, if there is at least one density $\frac{1}{2}$ point and $\gammaup$ is connected, then 
$\supp (T)$ is as well connected and the current $T$ has (therefore) multiplicity $1$ almost everywhere. In other words
the mass of $T$ coincides with the Hausdorff $m$-dimensional measure of its interior regular set. 

\medskip

In the forthcoming paper \cite{DDHM} we show the first general boundary regularity result in any codimension, which guarantees the density of boundary regular points without any restriction (except for a mild regularity assumption on $\gammaup$ and $\Sigma$: both are assumed to be of class $C^{3,a_0}$ for some positive $a_0$). As a corollary we answer Almgren's question in full generality showing: when $U=\Sigma$ and $\gammaup$ is connected, then there is always at least
one point of density $\frac{1}{2}$ and the support of any minimizer is connected. In the next section we will state the main results of \cite{DDHM}, whereas in Section \ref{s:sketch} we will give an account of their (quite long) proofs. Finally, in Section \ref{s:monotonia} we outline an interesting side remark sparked by one of the key computations in \cite{DDHM}. The latter
yields an alternative proof of Allard's boundary monotonicity formula under slightly different assumptions: in particular it covers, at the same time, the Gr\"uter-Jost monotonicity formula for free boundary stationary varifolds. 

\section{Main theorems} \label{s:main}

Our main result in \cite{DDHM} is the following

\begin{theorem}\label{thm:main}
Consider a $C^{3,a_0}$ complete Riemannian submanifold $\Sigma\subset\R^{m+n}$ of dimension $m+\overline n$ and an open set $W\subset \mathbb R^{m+n}$. Let $\gammaup\subset\Sigma\cap W$ be a $C^{3,a_0}$ oriented submanifold without boundary in $W\cap \Sigma$ and let $T$ be an integral $m$-dimensional mass-minimizing current in $W \cap \Sigma$ with boundary $\partial T \res W =\a{\gammaup}$.
Then $\breg(T)$ is dense in $\gammaup$.
\end{theorem}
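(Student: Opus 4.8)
\medskip
\noindent\textbf{Sketch of the proof of Theorem~\ref{thm:main}.}
The plan is to establish the equivalent statement that $\bsing(T)$ has empty interior in $\gammaup$, i.e.\ that $\breg(T)=\gammaup\setminus\bsing(T)$ meets every nonempty relatively open subset of $\gammaup$. The starting point is a boundary monotonicity formula: there is a constant $C=C(\Sigma,\gammaup)$ such that, for every $p\in\gammaup$, the function $r\mapsto e^{Cr}\,\mathbf M\big(T\res\bB_r(p)\big)/(\omega_m r^m)$ is nondecreasing for $r$ small, the deviations of $\gammaup$ and of $\Sigma$ from their tangent planes at $p$ producing error terms that are integrable in $r$ given that $\gammaup,\Sigma\in C^{3,a_0}$ (much less regularity would suffice here; the full strength of the hypothesis enters only in the center manifold below). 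This is essentially the elementary first-variation computation discussed in Section~\ref{s:monotonia}. From it one reads off, at every $p\in\gammaup$: existence of the density $\Theta(T,p)$; upper semicontinuity of $p\mapsto\Theta(T,p)$; the bound $\Theta(T,p)\ge\tfrac12$, by a soft comparison with half-planes; and the existence of tangent cones at $p$, namely area-minimizing cones $\mathbf C$ in $\R^{m+\overline n}$ with $\partial\mathbf C=\a{T_p\gammaup}$ and $\Theta(\mathbf C,0)=\Theta(T,p)$.

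\medskip
The technical core is a \emph{first boundary regularity theorem}: there is $\eps_0>0$ such that, if the cylindrical excess of $T$ over an $m$-dimensional half-plane $\pi^+$ is smaller than $\eps_0$ at unit scale and $\gammaup$ is $C^{3,a_0}$-close to $\partial\pi^+$, then on a smaller half-ball $\supp(T)$ is a single $C^{1,\alpha}$ graph over $\pi^+$ — hence, by Schauder estimates, a smooth sheet — with boundary $\gammaup$; the same holds, with $\supp(T)$ a single smooth sheet carrying multiplicities $Q$ and $Q-1$ on its two sides, when $T$ is instead close to the flat cone $Q\a{\pi^+}+(Q-1)\a{\pi^-}$. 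For $Q=1$ this is a version of Allard's boundary regularity theorem valid for a curved ambient $\Sigma$ and a merely $C^{3,a_0}$ boundary; for $Q\ge2$ it is new, and it is the reason the proof in \cite{DDHM} is long. Proving it requires constructing a \emph{center manifold adapted to the boundary}: a $C^{3,\alpha}$ manifold $\mathcal M$ that osculates $T$ to high order and carries a lift of $\gammaup$, over which $T$ is well approximated by a Lipschitz multivalued map interfacing correctly across $\gammaup$ — a $Q$-valued graph on one side, a $(Q-1)$-valued graph on the other — and then running the Almgren scheme (boundary Lipschitz approximation, $\D$-minimizer approximation, and a \emph{monotonicity of Almgren's frequency function up to the boundary}) in this setting. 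An immediate consequence is $\Theta(T,p)=\tfrac12\Rightarrow p\in\breg(T)$, and, via a rigidity statement for boundary cones of density close to $\tfrac12$, that $\{\Theta(T,\cdot)=\tfrac12\}$ is relatively open in $\gammaup$.

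\medskip
With these tools the density of $\breg(T)$ follows by stratification and contradiction. Suppose $\bsing(T)\supseteq\gammaup\cap B$ for some ball $B$ with $\gammaup\cap B\neq\emptyset$, and stratify $\gammaup\cap B$ by the dimension of the spine (the maximal subspace of translational invariances) of the tangent cone. Since the spine is always contained in $\partial\mathbf C=T_p\gammaup$, a Federer--Almgren dimension reduction — legitimate because iterated blow-ups are again area-minimizing cones with planar boundary and the first regularity theorem rules out the top stratum at the base of the induction — bounds by $m-2$ the dimension of the union of all strata with spine of dimension $\le m-2$; this union therefore has $\cH^{m-1}$-measure zero and cannot exhaust $\gammaup\cap B$. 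Hence $\gammaup\cap B$ contains a set of positive $\cH^{m-1}$-measure whose points have a tangent cone with spine of dimension exactly $m-1$, which, by the classification of $1$-dimensional area-minimizing cones with boundary $\a0$, must be a flat cone $Q\a{\pi^+}+(Q-1)\a{\pi^-}$. If $Q=1$ on a positive-measure subset, the first regularity theorem makes those points regular, contradicting $\gammaup\cap B\subset\bsing(T)$. Otherwise there is $Q\ge2$ and a positive-$\cH^{m-1}$-measure set $F_Q$ of multiplicity-$Q$ flat points; at a $\cH^{m-1}$-density point $p_0$ of $F_Q$ an Almgren boundary blow-up produces a nontrivial multivalued $\D$-minimizer which the density-point property forces to be invariant in the $m-1$ tangential directions of $T_{p_0}\gammaup$, and this rigidity, reinserted into the structural part of the first regularity theorem, makes $p_0$ regular — again a contradiction. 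Thus $\bsing(T)$ has empty interior in $\gammaup$, which is the assertion.

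\medskip
\textbf{The main obstacle} is the construction of the boundary-adapted center manifold, together with the frequency-function monotonicity up to the boundary, underlying the first regularity theorem: in the interior these are already the heart of the Almgren--De Lellis--Spadaro program, and adapting them so that the osculating manifold contains $\gammaup$ and the multivalued approximation carries the correct jump of multiplicity across it, while preserving all the quantitative estimates needed for the blow-up in the last step, is where essentially all the difficulty concentrates; by comparison, the monotonicity formula of the first step and the dimension reduction of the last are soft. (As Almgren observed, once $U=\Sigma$ and $\gammaup$ is connected the existence of a single density-$\tfrac12$ point — which Theorem~\ref{thm:main} provides — forces $\supp(T)$ to be connected, so that $T$ has multiplicity one almost everywhere.)
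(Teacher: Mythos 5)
Your sketch correctly identifies the broad architecture (boundary monotonicity with a weight, multivalued Lipschitz approximation, a boundary center manifold, boundary frequency function, blow-up to a $\D$-minimizer, and a closing soft argument), but the soft argument you close with is not the one the paper uses, and the substitution creates a genuine gap.

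The paper does \emph{not} prove an $\eps$-regularity theorem of the strength you posit for $Q\geq 2$. Closeness of $T$ to the flat cone $Q\a{\pi^+}+(Q-1)\a{\pi^-}$ at one scale, with small excess, is \emph{not} a hypothesis under which the Almgren scheme can be run at the boundary: the boundary frequency function monotonicity — as the paper stresses, quoting \cite{Jonas2} — holds only when the approximating $\qhalf$ map has \emph{collapsed interface}, i.e.\ when all $2Q-1$ sheets meet at $\gammado$. Small excess at one scale does not force this: one can have a single multiplicity-one sheet spanning $\gammaup$ together with $Q-1$ extra sheets that hug it closely but detach from $\gammaup$ immediately, which is flat, low-excess, and not collapsed. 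The regularity theory of \cite{DDHM} is therefore developed only at ``collapsed points,'' defined by the two requirements that (i) there is a flat tangent cone and (ii) in some neighborhood $U$ of $p$ in $\gammaup$ the density satisfies $\Theta(T,q)\geq\Theta(T,p)$ for all $q\in\gammaup\cap U$. The existence of a dense set of such points is obtained by a classical upper Baire category argument (combined with a stratification giving flat tangent cones off a set of dimension $\leq m-2$), and it is precisely condition (ii) that rules out the sheet-detachment scenario and delivers the collapsed interface for the $\qhalf$ approximation.

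Your final step — taking $p_0$ a $\cH^{m-1}$-density point of the set $F_Q$ of multiplicity-$Q$ flat points and extracting tangential translation invariance of a blow-up — does not supply what is missing. First, to produce a nontrivial blow-up to a $\D$-minimizer you already need the boundary frequency monotonicity, hence the collapsed hypothesis, so the argument is circular. Second, being a density point of $F_Q$ only controls the density at $\cH^{m-1}$-a.e.\ nearby boundary point; it leaves a measure-zero set where the density may drop below $Q-\tfrac12$, which is enough for the sheets to detach and exactly what (ii) forbids. These are not the same condition, and the Baire category/local-minimum formulation is the one that actually works. A further signal that your $\eps$-regularity theorem is too strong as stated: it would immediately give $\dim\bsing(T)\leq m-2$, which the paper explicitly flags as an open question it does \emph{not} resolve (indeed Theorem~\ref{thm:example} shows boundary behavior is genuinely worse than interior). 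You should replace the ``stratification $+$ positive-measure $+$ density-point'' closing argument with the paper's ``stratification $+$ Baire category $\Rightarrow$ dense set of collapsed points,'' and state the regularity theorem for $Q\geq 2$ with the collapsed hypothesis rather than as a free-standing excess-decay statement.
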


As a simple corollary of the theorem above, we conclude that Almgren's Question \ref{q:alm} has a positive answer. 

\begin{corollary}
Let $W= \mathbb R^{m+n}$ and assume $\Sigma, \gammaup$ and $T$ are as in Theorem \ref{thm:main}. If $\gammaup$ is connected, then
\begin{itemize}
\item Every point in $\breg(T)$ has density $\frac{1}{2}$;
\item The support $\supp (T)$ of the current $T$ is connected;
\item The multiplicity of the current is $1$ at $\mathcal{H}^m$-a.e. interior point, and so the mass of the current coincides with $\mathcal{H}^m (\supp (T))$.
\end{itemize}
\end{corollary}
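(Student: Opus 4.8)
\emph{Sketch of a proof.} The plan is the following. By Theorem~\ref{thm:main}, $\breg(T)$ is dense in $\gammaup$, hence nonempty; fix $p\in\breg(T)$. Everything will follow once we know that \emph{every} point of $\breg(T)$ has density $\tfrac12$: for then $p$ itself is a density-$\tfrac12$ point, and Almgren's argument in \cite[\S5.23]{Alm} shows that the existence of such a point together with connectedness of $\gammaup$ forces $\supp(T)$ to be connected and $T$ to have multiplicity $1$ at $\cH^m$-a.e.\ point, whence $\mathbf M(T)=\cH^m(\supp(T))$.

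So suppose, towards a contradiction, that some $q\in\breg(T)$ has density $Q-\tfrac12$ with $Q\ge2$; near $q$ then $\supp(T)$ coincides with a boundaryless minimal submanifold $\Sigma_0$ and $T$ restricted there equals $Q\a{\Sigma_0^+}+(Q-1)\a{\Sigma_0^-}$. I would consider $R:=\a{\supp(T)}$, the multiplicity-one integral current carried by $\supp(T)$: for a mass minimizer this is again an integral current, being the first of the nested superlevel-set currents whose sum is $T$. Its boundary is an integral $(m-1)$-cycle whose support lies in $\gammaup$: off the interior singular set — of dimension $\le m-2$, hence unable to carry a finite-mass $(m-1)$-current — the support of $T$ is, near interior points, a smooth minimal submanifold and, near $\breg(T)$, either a boundaryless submanifold $\Sigma_0$ (at points of density $\ge\tfrac32$) or a submanifold with boundary $\Sigma_0^+\cup\gammaup$ (at points of density $\tfrac12$), so $\partial R$ is locally $0$, resp.\ $\a{\gammaup}$, there, while the remaining points of $\supp(\partial R)$ lie in $\bsing(T)\subset\gammaup$. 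Since $\gammaup$ is connected, the Constancy Lemma yields $\partial R=c\,\a{\gammaup}$ with $c\in\mathbb Z$; reading this near $q$, where $\supp(T)=\Sigma_0$ has no boundary, gives $c=0$.

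But if $c=0$ then $R$ is a nonzero integral cycle, so $T-R$ is an integral current in $W\cap\Sigma$ with $\partial(T-R)=\a{\gammaup}$, and since $R$ has multiplicity one over $\supp(T)$ while $T$ has multiplicity $\ge1$ there, $\mathbf M(T-R)=\mathbf M(T)-\cH^m(\supp(T))<\mathbf M(T)$; as $W=\R^{m+n}$, $T-R$ is an admissible competitor and we contradict the minimality of $T$. Hence no such $q$ exists, i.e.\ every regular point has density $\tfrac12$, and the corollary follows as explained above. (In fact, once this is known $\partial R=\a{\gammaup}$, so $R$ is itself a competitor for $T$ and $\cH^m(\supp(T))=\mathbf M(R)\ge\mathbf M(T)\ge\cH^m(\supp(T))$ already yields multiplicity one a.e.; only the connectedness of $\supp(T)$ genuinely uses the argument of \cite[\S5.23]{Alm}.)

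The step I expect to be the real obstacle is the input that $R=\a{\supp(T)}$ is an integral current with boundary of locally finite mass, precisely near $\bsing(T)$, about which Theorem~\ref{thm:main} only asserts that it is nowhere dense in $\gammaup$. The natural remedy is to strengthen the analysis behind Theorem~\ref{thm:main} to the conclusion $\cH^{m-1}(\bsing(T))=0$, so that $\partial R$ cannot charge $\bsing(T)$; alternatively one may carry out the superlevel-set decomposition only over the open dense set $\breg(T)$, where the local structure of $T$ is explicit, and let the constancy along the connected $\gammaup$ do the rest.
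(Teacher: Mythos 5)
Your strategy closely mirrors the route the paper itself signals, namely passing through the structural decomposition recorded in Theorem \ref{thm:primes}: take the multiplicity-one current $R$ carried by $\supp(T)$, use the Constancy Lemma on the connected $\gammaup$ to write $\partial R = c\,\a{\gammaup}$, read $c=0$ off a hypothetical boundary regular point of density at least $\tfrac32$ (where $\supp(T)$ is locally a boundaryless minimal submanifold), and then contradict minimality because $T-R$ is an admissible competitor of strictly smaller mass. The remaining passage from ``density $\tfrac12$ everywhere on $\breg(T)$'' to connectedness of $\supp(T)$ and multiplicity one is also the intended one and is essentially Almgren's \cite[\S 5.23]{Alm}, just as you note. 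At the level of ideas the scheme is correct.

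The genuine gap is the one you flag yourself, and it is not a side issue: why is $R=\a{\supp(T)}$ an \emph{integral} current, i.e.\ why does $\partial R$ have locally finite mass? The justification you offer, that $R$ is ``the first of the nested superlevel-set currents whose sum is $T$,'' is a codimension-one device (Fleming's slicing of a boundary of finite perimeter) which has no counterpart in higher codimension, where the present theorem lives. In the announced framework the integrality of the multiplicity-one pieces $T_j=\cH^m\res\Lambda_j$ is precisely the content of Theorem \ref{thm:primes}, so it cannot be invoked as an input here: the paper presents the corollary as a \emph{consequence} of that structure theorem, not the other way round. Concretely, $\partial R$ is supported on $\gammaup\cup\bsing(T)\cup(\text{interior singular set})$; the interior singular set is harmless (dimension $\le m-2$), but Theorem \ref{thm:main} only makes $\bsing(T)$ nowhere dense in $\gammaup$, which does not rule out $\cH^{m-1}(\bsing(T))>0$, so one cannot yet discard a boundary term concentrated there, nor even guarantee finite mass of $\partial R$ near $\bsing(T)$. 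The remedies you sketch -- an $\cH^{m-1}$-null estimate on $\bsing(T)$, or a decomposition built only over the open dense set $\breg(T)$ and then propagated by connectedness -- are plausible directions, but they are not substantiated in the proposal and represent the substantive work that \cite{DDHM} carries out in proving Theorem \ref{thm:primes}.
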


In fact the above corollary is just a case of a more general ``structural'' result, which is also a consequence of Theorem
\ref{thm:main}.

\begin{theorem}\label{thm:primes}
Let $W= \mathbb R^{m+n}$ and assume $\Sigma, \gammaup$ and $T$ are as in Theorem \ref{thm:main}. Denote by $\Gamma_1,\ldots,\Gamma_N$ the connected components of $\gammaup$. Then
\begin{equation}\label{e:dec_indec}
T=\sum_{j=1}^{\overline N} Q_jT_j\,, 
\end{equation}
where:
\begin{itemize}
\item[(a)]  For every $j=1,\ldots,\overline N$, $T_j$ is an integral current with $\partial T_j=\sum_{i=1}^{N}\sigma_{ij}\a{ \Gamma_i}$ and $\sigma_{ij}\in\{-1,0,1\}$.
\item[(b)] For every $j=1,\ldots,\overline N$, $T_j$ is an area-minimizing current and $T_j=\cH^{m}\res \Lambda_j$, where $\Lambda_1,\ldots,\Lambda_{\overline N}$ are the connected components of $\supp(T)\setminus\Gamma$.
\item[(c)] Each $\Gamma_i$ is 
\begin{itemize} 
\item either ``one-sided'', which means that all coefficients $\sigma_{ij}=0$ except for one $j=o(i)$ for which $\sigma_{io(i)}=1$;
\item or ``two-sided'', which means that:
\begin{itemize}
\item there is one $j=p(i)$ such that $\sigma_{ip(i)}=1$,
\item there is one $j=n(i)$ such that $\sigma_{in(i)}=-1$,
\item all other $\sigma_{ij}=0$.
\end{itemize}
\end{itemize}
\item[(d)] If $\Gamma_i$ is one-sided, then $Q_{o(i)}=1$ and all points in $\Gamma_i\cap \breg{T}$ have multiplicity $\frac 12$. 
\item[(e)] If $\Gamma_i$ is two-sided, then $Q_{n(i)}=Q_{p(i)}-1$, all points in $\Gamma_i\cap\breg{T}$ have multiplicity $Q_{p(i)}-\frac 12$ and $T_{p(i)}+T_{n(i)}$ is area minimizing.
\end{itemize}
\end{theorem}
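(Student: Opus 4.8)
\emph{Plan.} I would take \eqref{e:dec_indec} to be the \emph{geometric} decomposition of $T$ induced by the connected components of $\supp(T)\setminus\gammaup$, and then read off (a)--(e) from the local structure of $T$ at the boundary regular points produced by Theorem \ref{thm:main}. Since $T$ is mass-minimizing with compact boundary $\a{\gammaup}$ its support is compact, and by the interior regularity theory the relatively open set $\supp(T)\setminus\gammaup$ is locally connected; let $\Lambda_1,\Lambda_2,\dots$ be its connected components (at most countably many). Standard arguments show that each $T_{\Lambda_j}:=T\res\Lambda_j$ is an integral current with $\supp(\partial T_{\Lambda_j})\subset\gammaup$, that $T=\sum_j T_{\Lambda_j}$, and that $\mathbf{M}(T)=\sum_j\mathbf{M}(T_{\Lambda_j})$ (the $\Lambda_j$ being pairwise $\cH^m$-disjoint and $\|T\|(\gammaup)=0$). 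A one-line comparison then shows that each $T_{\Lambda_j}$, and in fact any finite subsum of the $T_{\Lambda_j}$, is again mass-minimizing in $W\cap\Sigma$: if some $\partial S$ decreased its mass, the same $\partial S$ would strictly decrease $\mathbf{M}(T)$. By interior regularity the multiplicity of $T$ is an integer that is locally constant on the dense connected regular part of each $\Lambda_j$, so $T_{\Lambda_j}=Q_j\,T_j$ with $Q_j\in\N$, $Q_j\ge 1$, and $T_j:=\cH^m\res\Lambda_j$ carrying the orientation induced from $T$; multiplying an admissible competitor for $T_j$ by $Q_j$ shows $T_j$ is itself mass-minimizing, and the Constancy Lemma applied on each connected $\Gamma_i$ gives $\partial T_j=\sum_i\sigma_{ij}\a{\Gamma_i}$ with $\sigma_{ij}\in\mathbb{Z}$ and $\sum_j Q_j\sigma_{ij}=1$. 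This already yields \eqref{e:dec_indec} and (b) apart from the remaining claims about boundaries.

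\emph{Finiteness and the boundary trichotomy.} Every non-zero $T_{\Lambda_j}$ satisfies $\mathbf{M}(T_{\Lambda_j})\ge\eps_0$ for a positive $\eps_0$ depending only on the geometry of $\gammaup$ and $\Sigma$: if $\partial T_{\Lambda_j}\neq 0$ then $\supp(\partial T_{\Lambda_j})$ contains an entire component $\Gamma_i$ and the boundary monotonicity formula of Section \ref{s:monotonia} at a point of $\Gamma_i$ gives a lower bound for $\mathbf{M}(T_{\Lambda_j})$ at a fixed small radius, while otherwise the interior monotonicity formula at any point of $\supp(T_{\Lambda_j})$ does the same. Since $\sum_j\mathbf{M}(T_{\Lambda_j})=\mathbf{M}(T)<\infty$, only finitely many $\Lambda_j$ occur, say $\Lambda_1,\dots,\Lambda_{\overline N}$. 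Now fix $\Gamma_i$; by Theorem \ref{thm:main} there is $q\in\Gamma_i\cap\breg(T)$, near which $T=Q\a{\Sigma_0^+}+(Q-1)\a{\Sigma_0^-}$ as in the discussion after Definition \ref{def:reg_points}. If $Q=1$, only one component, say $\Lambda_{o(i)}\supset\Sigma_0^+$, meets $\supp(T)$ near $q$; comparison with $T=\sum_k Q_kT_k$ forces $Q_{o(i)}=1$, $\partial T_{o(i)}=\a{\Gamma_i}$ near $q$, and $\partial T_k=0$ near $q$ for $k\neq o(i)$. If $Q\ge 2$, then $\Sigma_0^+$ and $\Sigma_0^-$ lie in two \emph{distinct} components (they carry different $T$-multiplicities, while $T\res\Lambda_k$ has constant multiplicity $Q_k$), say $\Sigma_0^+\subset\Lambda_{p(i)}$ and $\Sigma_0^-\subset\Lambda_{n(i)}$, and comparison forces $Q_{p(i)}=Q$, $Q_{n(i)}=Q-1$, $\partial T_{p(i)}=\a{\Gamma_i}$, $\partial T_{n(i)}=-\a{\Gamma_i}$ near $q$, and $\partial T_k=0$ near $q$ otherwise. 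The Constancy Lemma promotes these local identities to global ones on $\Gamma_i$, yielding (a), (c) (in particular each $\Gamma_i$ is one-sided or two-sided), the identity $Q_{o(i)}=1$ in (d), and $Q_{n(i)}=Q_{p(i)}-1$ in (e). Finally, since $p(i)$ (resp. $o(i)$, $n(i)$) is globally characterized as the unique index $k$ with $\sigma_{ik}=1$ (resp. $\sigma_{ik}=-1$), the integer $Q$ measured at $q$ is independent of $q$; hence $\Theta(T,\cdot)\equiv Q-\tfrac12$ on $\Gamma_i\cap\breg(T)$, which is the multiplicity claim in (d) and (e).

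\emph{Minimality of the two-sided sum.} For a two-sided $\Gamma_i$ set $R:=T_{p(i)}+T_{n(i)}$. Using $Q_{n(i)}=Q_{p(i)}-1$ one rewrites $T=(Q_{p(i)}-1)R+R'$ with $R':=T_{p(i)}+\sum_{k\neq p(i),n(i)}Q_kT_k$; although $\supp R$ and $\supp R'$ overlap on $\overline{\Lambda_{p(i)}}$, the $\cH^m$-disjointness of the $\Lambda_k$ and additivity of multiplicities still give $\mathbf{M}(T)=(Q_{p(i)}-1)\mathbf{M}(R)+\mathbf{M}(R')$. Since $Q_{n(i)}\ge 1$ forces $Q_{p(i)}-1\ge 1$, any $S$ with $\mathbf{M}(R+\partial S)<\mathbf{M}(R)$ would give $\mathbf{M}(T+(Q_{p(i)}-1)\partial S)<\mathbf{M}(T)$, contradicting the minimality of $T$. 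Hence $T_{p(i)}+T_{n(i)}$ is mass-minimizing, completing (e).

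\emph{Main obstacle.} The delicate part is not the bookkeeping of the last two paragraphs but the foundational claims in the first: that restricting $T$ to a connected component of $\supp(T)\setminus\gammaup$ produces a bona fide integral current whose boundary is carried by $\gammaup$, that $T$ has \emph{constant} integer multiplicity on each such component --- a statement made subtle by the fact that the interior singular set has codimension only two and could, a priori, disconnect the regular part of a component --- and the uniform lower bound $\mathbf{M}(T_{\Lambda_j})\ge\eps_0$. These rest squarely on the interior regularity theory and on the (interior and boundary) monotonicity formulae; once they are in place, the decomposition and properties (a)--(e) follow from Theorem \ref{thm:main} by the elementary comparison and constancy arguments above.
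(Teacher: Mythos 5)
The paper itself gives no proof of Theorem \ref{thm:primes}: it is merely announced as a ``consequence of Theorem \ref{thm:main}'' with the details deferred to \cite{DDHM}. So the comparison has to be against the natural route one would expect, and your outline is indeed that route: decompose along the connected components of $\supp(T)\setminus\gammaup$, use interior regularity to factor out an integer multiplicity on each, use monotonicity formulae to get finiteness, and then read off (a)--(e) at the boundary regular points supplied by Theorem \ref{thm:main}. The bookkeeping in your last three paragraphs (the constancy-lemma promotion of local information, the mass-additivity comparison arguments, and the rewriting $T=(Q_{p(i)}-1)R+R'$) is sound and matches what the authors must be doing.

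The one place where I would not accept the argument as written is the chain of ``standard arguments'' in the first paragraph, and you yourself correctly single it out as the real obstacle. Concretely: (i) you assert that $\supp(T)\setminus\gammaup$ is locally connected ``by the interior regularity theory''; this is not an immediate consequence, since near an interior singular point one has no uniform control ensuring arbitrarily small connected neighborhoods, and without local connectivity the components $\Lambda_j$ are not obviously open, Borel, or at most countable. A cleaner path is to work first with the connected components of the interior \emph{regular} set (which is an honest $m$-manifold): those are open, each carries an integer multiplicity, and the interior monotonicity formula bounds their number; the $\Lambda_j$ are then obtained by adjoining the $\cH^m$-null singular set. (ii) Even so, the crucial claim that $T$ has \emph{constant} multiplicity on $\Lambda_j$ still requires knowing that the regular part of a connected $\Lambda_j$ is itself connected, i.e. that the singular set (which has Hausdorff dimension $\le m-2$, not $\le m-2$ Hausdorff \emph{measure} zero) cannot separate two regular sheets of different multiplicity inside one $\Lambda_j$. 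This is true, but it is a genuine lemma, not a one-liner: the usual proof is to observe that if $\Lambda_j\cap\mathrm{Reg}(T)$ split as $R_1\sqcup R_2$, then $T\res R_1$ would be an integral current whose boundary away from $\gammaup$ is a flat $(m-1)$-chain supported in a set of Hausdorff dimension $\le m-2$, hence zero by the flat support theorem, forcing $T\res R_1$ to be a nontrivial mass-minimizing \emph{cycle} with compact support in $\mathbb R^{m+n}$ (or to split off from $T_{\Lambda_j}$), which is impossible. I would want this spelled out; once it is, the rest of your proposal goes through. A small additional remark: in your ``finiteness'' paragraph the alternative where $\partial T_{\Lambda_j}=0$ can be dismissed outright (any such piece could be deleted to strictly decrease the mass of $T$), so you only ever need the boundary monotonicity bound.
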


Note that, as a simple consequence of Theorem \ref{thm:primes} and the interior regularity theory, we conclude that in every ``two-sided'' component $\Gamma_i$ of the boundary $\gammaup$ the boundary singular points have dimension at most $m-2$.

In view of the interior regularity results, one might be tempted to conjecture that Theorem \ref{thm:main} is very suboptimal and that the Hausdorff dimension of ${\rm Sing_b} (T)$ is at most \(m-2\). Though currently we do not have an answer to this question, let us stress that at the boundary some new phenomena arise. Indeed, in \cite{DDHM}, we can prove the following:

\begin{theorem}\label{thm:example}
There are a smooth closed simple curve $\Gamma \subset \mathbb R^4$ and a mass minimizing current $T$ in $\mathbb R^4$ such that $\partial T = \a{\Gamma}$ and $\bsing(T)$ has an accumulation point.
\end{theorem}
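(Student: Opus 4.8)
The plan is to realize everything inside $\mathbb R^4\cong\mathbb C^2$ and to obtain mass-minimality essentially for free from the calibration by the K\"ahler form: by Wirtinger's inequality (cf.\ Federer), a bounded holomorphic curve $V\subset\mathbb C^2$, taken with its natural orientation and integer multiplicity, gives a current $\a V$ that minimizes the mass among all integral currents with boundary $\partial\a V$. Hence it suffices to produce a bounded holomorphic curve $V\subset\mathbb C^2$ such that $\Gamma:=\partial\a V$ is a $C^\infty$ embedded simple closed curve and such that the singularities of the underlying variety (branch points and self-intersection points) accumulate at infinitely many points $q_1,q_2,\dots$ of $\Gamma$ which themselves converge to a single $q_\infty\in\Gamma$. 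Indeed a holomorphic curve is never contained in a smooth submanifold near one of its singular points, so no $\Sigma_0$ as in Definition~\ref{def:reg_points} can contain $\supp(T)$ near a point where such singularities accumulate; consequently each $q_j$ lies in $\bsing(T)$, and $q_\infty$ is the required accumulation point of $\bsing(T)$. With $T:=\a V$ this is the assertion of the theorem.

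The core of the argument is the construction of $V$, which I would split into a local model and a globalization. For the local model, one wants a holomorphic curve over a half-disk whose singularities accumulate at a single prescribed boundary point $q$, while $\Gamma$ remains $C^\infty$ and embedded near $q$. The tension is real: a holomorphic function on a disk cannot have zeros accumulating at an interior point, so the zeros producing the branching must approach the boundary, where in general they force the boundary values, and hence $\Gamma$, to oscillate wildly. The way to reconcile this is an infinite-order degeneration: one multiplies a Blaschke-type product carrying the accumulating zeros by an exponentially flat holomorphic factor such as $\exp\!\big(-\tfrac{1+\zeta}{1-\zeta}\big)$, which is $C^\infty$ up to the boundary, vanishes together with all its derivatives at $q$, and dominates the (carefully controlled) growth of the Blaschke factor near $q$; the product is then $C^\infty$ up to $\overline{\mathbb D}$ and still has infinitely many interior zeros tending to $q$, but its flatness at $q$ kills all the boundary oscillation there, so the associated boundary curve is smooth and embedded. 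For the globalization, one picks $q_j\in\partial\mathbb D$ with $q_j\to q_\infty$, chooses disjoint arcs around them, and assembles a single holomorphic function on $\mathbb D$, $C^\infty$ on $\overline{\mathbb D}$, which near each $q_j$ coincides up to a smooth nonvanishing factor with a copy of the local model and has no further zeros on $\overline{\mathbb D}$; connectedness and embeddedness of $\Gamma$ are then enforced by a suitable parity/winding bookkeeping along $\partial\mathbb D$.

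I expect the main obstacle to be exactly this construction, and within it the global $C^\infty$ regularity and embeddedness of $\Gamma$. One has to control the flat factors at the infinitely many points $q_j$ and at their limit $q_\infty$ simultaneously, with estimates uniform enough that the assembled boundary curve is genuinely smooth and has no self-intersections, while at the same time checking, via tangent-cone and density computations along the accumulating zeros, that the interior singularities clustering at each $q_j$ are not accidentally resolved into a smooth sheet. These two demands pull against one another: accumulating singularities tend to roughen $\Gamma$, whereas the smoothness of $\Gamma$ tends to suppress them, and threading between them is where essentially all of the work lies. By contrast the calibration argument for mass-minimality and the identification of the accumulation points of the holomorphic singularities as boundary singular points of $T$ are comparatively soft.
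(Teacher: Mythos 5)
Your proposal follows essentially the same route that the authors take in the forthcoming \cite{DDHM}: work in $\mathbb C^2$, obtain mass-minimality for free from the K\"ahler calibration via Wirtinger's inequality, and then engineer a bounded holomorphic curve with smooth boundary whose branch points cluster at infinitely many boundary points by multiplying against holomorphic factors that vanish to infinite order at the boundary. That last ingredient is exactly the mechanism of \cite{Jonas2} (``Examples of holomorphic functions vanishing to infinite order at the boundary''), which the paper itself flags as the source of these boundary pathologies. You also correctly locate the real difficulty — making the flattening uniform across the accumulating family $q_j\to q_\infty$ so that $\Gamma$ stays globally $C^\infty$ and embedded, while not inadvertently resolving the branching — and correctly note that the calibration step and the deduction that accumulation points of the variety's singular set lie in $\bsing(T)$ are the soft parts.
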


In particular Chang's result, namely the discreteness of {\em interior} singular points for two dimensional mass minimizing currents, does not hold at the boundary. Actually the example can be modified in order to obtain also a sequence of interior singular points accumulating towards the boundary, see \cite{DDHM}.

\section{The main steps to Theorem \ref{thm:main}}\label{s:sketch}

In this section we outline the long road which is taken in \cite{DDHM} to prove Theorem
\ref{thm:main}. We fix therefore $\Sigma, \gammaup$ and $T$ as in Theorem \ref{thm:main}.

\subsection{Reduction to collapsed points} Recalling Allard's monotonicity formula, we introduce at each boundary point $p\in \gammaup$ the density $\Theta (T, p)$, namely the limit, as $r\downarrow 0$ of the normalized mass ratio in the ball $\bB_r (p)\subset \mathbb R^{m+n}$ (in particular the normalization is chosen so that at regular boundary points the density coincides with the one defined in the previous section).  Using a suitable variant of Almgren's stratification theorem, we conclude first that, except for a set of Hausdorff dimension at most $m-2$, at any boundary point $p$ there is a tangent cone which is ``flat'', namely which is contained in an $m$-dimensional plane $\pi \supset T_0 \gammaup$. Secondly, using a classical upper Baire category argument, we show that, for a dense subset of boundary points $p$, additionally to the existence of a flat tangent cone, there is a sufficiently small neighborhood $U$ where the density $\Theta (T,q)$ is bounded below, at any $q\in \Gamma \cap U$, by $\Theta (T, p)$. In particular 
the proof of Theorem \ref{thm:main} is reduced to the claim that any such point, which we call ``collapsed'', is in fact regular.

\subsection{The ``linear'' theory} Assume next that $0\in \gammaup$ is a collapsed point and let $Q-\frac{1}{2}$ be its density. By Allard's boundary regularity theory for stationary varifolds, we know a priori that $0$ is a regular point if $Q=1$ and thus we can assume, without loss of generality, that $Q \geq 2$. Fix a flat tangent cone to $0$ and assume, up to rotations, that it is the plane $\pi_0 = \mathbb R^m \times \{0\}$ and that $T_0 \gammaup = \{x_m=0\} \cap \pi_0$. Denote by $\pi_0^\pm$ the two half-planes $\pi_0^\pm = \{\pm x_m > 0\} \cap \pi_0$. Assume for the moment that, at suitably chosen small scales, the current $T$ is formed by $Q$ sheets over $\pi_0^+$ and $Q-1$ sheets over $\pi_0^-$. By a simple linearization argument such sheets must then be almost harmonic (in a suitable sense). 

Having this picture in mind, it is natural to develop a theory of $\qhalf$-valued functions minimizing the Dirichlet energy. In order to explain the latter object consider the projection $\gammado$ of $\gammaup$ onto $\pi_0$. On a sufficiently small disk $\bB_r (0) \cap \pi_0$, $\gammado$ divides $\pi_0$ into two regions. A Lipschitz $\qhalf$-valued map consists of:
\begin{itemize}
\item a Lipschitz $Q$-valued map (in the sense of Almgren, cf. \cite{DS1}) $u^+$ on one side of $\gammado$
\item and a Lipschitz $(Q-1)$-valued map $u^-$ on the other side,
\end{itemize}
satisfying the compatibility condition that the union of their graphs form a current whose boundary is the submanifold $\gammaup$ itself. A $\qhalf$-map will then be called ${\rm Dir}$-minimizing if it minimizes the sum of the Dirichlet energies of the two ``portions'' $u^+$ and $u^-$ under the constraint that $\Gamma$ and the boundary values on $\partial (\bB_r (0) \cap \pi_0)$ are both fixed. 

The right counterpart of the ``collapsed point'' situation described above is the assumption that all the $2Q-1$ sheets meet at their common boundary $\gammaup$; under such assumption we say that the $\qhalf$ ${\rm Dir}$-minimizer has collapsed interface. We then develop a suitable regularity theory for minimizers with collapsed interface. First of all their H\"older continuity follows directly from the Ph.D. thesis of the third author, cf. \cite{Jonas}. Secondly, the most important conclusion of our analysis is that a minimizer can have collapsed interface only if it consists of a single harmonic sheet ``passing through'' the boundary data, counted therefore with multiplicity $Q$ on one side and with multiplicity $Q-1$ on the other side. 

The latter theorem is ultimately the ``deus ex machina'' of the entire argument leading
to Theorem \ref{thm:main}. The underlying reason for its validity is that a monotonicity formula for a suitable variant of Almgren's frequency function holds. Given the discussion of \cite{Jonas2}, such monotonicity can only be hoped in the collapsed situation and, remarkably, this suffices to carry on our program. 

The validity of the monotonicity formula is clear when the collapsed interface is flat.
However, when we have a curved boundary, a subtle yet important point becomes crucial: we cannot hope in general for the exact first variation identities which led Almgren to his monotonicity formula, but we must replace them with suitable inequalities. Moreover the latter can be achieved only if we adapt the frequency function by integrating a suitable weight. We illustrate this idea in a simpler setting in the next section. 

\subsection{First Lipschitz approximation} A first use of the linear theory is approximating the current with the graph of a Lipschitz $\qhalf$-valued map around collapsed points. The approximation is then shown to be almost ${\rm Dir}$-minimizing. Our approximation algorithm is a suitable adaptation of the one developed in \cite{DS3} for interior points. In particular, after adding an ``artificial sheet'', we can directly use the Jerrard-Soner modified BV estimates of \cite{DS3} to give a rather accurate Lipschitz approximation: the subtle point is to engineer the approximation so that it has collapsed interface. 

\subsection{Height bound and excess decay} The previous Lipschitz approximation, together with the linear regularity theory, is used to establish a power-law decay of the excess \`a la De Giorgi in a neighborhood of a collapsed point. The effect of such theorem is that the tangent cone is flat and unique at every point $p\in \gammaup$ in a sufficiently small neighborhood of the collapsed point $0\in \gammaup$. Correspondingly, the plane $\pi (p)$ which contains such tangent cone is H\"older continuous in the variable $p\in \gammaup$ and the current is contained in a suitable horned neighborhood of the union of such $\pi (p)$. 

An essential ingredient of our argument is an accurate height bound in a neighborhood of any collapsed point in terms of the spherical excess. The argument follows an important idea of Hardt and Simon in \cite{HS} and takes advantage of an appropriate 
variant of Moser's iteration on varifolds, due to Allard, combined with a crucial use of the remainder in the monotonicity formula. The same argument has been also used by Spolaor in a similar context in \cite{Spolaor}, where he combines it with the decay of the energy for ${\rm Dir}$-minimizers, cf. \cite[Proposition 5.1 \& Lemma 5.2]{Spolaor}.

\subsection{Second Lipschitz approximation} The decay of the excess proved in the previous step is used then to improve the accuracy of the Lipschitz approximation. In particular, by suitably decomposing the domain of the approximating map in a Whitney-type cubical decomposition which refines towards the boundary, we can take advantage of the interior approximation theorem of \cite{DS3} on each cube and then patch the corresponding graphs together. 

\subsection{Left and right center manifolds} The previous approximation result is combined with a careful smoothing and patching argument to construct a ``left'' and a ``right'' center manifold $\mathcal{M}^+$ and $\mathcal{M}^-$. The $\mathcal{M}^\pm$ are $C^{3,\kappa}$ submanifolds of $\Sigma$ with boundary $\gammaup$ and they provide a good approximation of the ``average of the sheets'' on both sides of $\gammaup$ in a neighborhood of the collapsed point $0\in \gammaup$. They can be glued together to form a $C^{1,1}$ submanifold $\mathcal{M}$ which ``passes through $\gammaup$'': each portion has $C^{3,\kappa}$ estimates {\em up to the boundary}, but we only know that the tangent spaces at the boundary coincide: we have a priori no information on the higher derivatives. The construction algorithm follows closely that of \cite{DS4} for the interior, but some estimates must be carefully adapted in order to ensure the needed boundary regularity. 

The center manifolds are coupled with two suitable approximating maps $N^\pm$. The latter take values on the normal bundles of $\mathcal{M}^\pm$ and provide an accurate approximation of the current $T$. Their construction is a minor variant of the one in \cite{DS4}. 

\subsection{Monotonicity of the frequency function and final blow-up argument} After constructing the center manifolds and the corresponding approximations we use a suitable Taylor expansion of the area functional to show that the monotonicity of the frequency function holds for the approximating maps $N^\pm$ as well. 

We then complete the proof of Theorem \ref{thm:main}: in particular we show that, if $0$ were a singular collapsed point, suitable rescalings of the approximating maps $N^\pm$ would produce, in the limit, a $\qhalf$ ${\rm Dir}$-minimizer violating the linear regularity theory. On the one hand the estimate on the frequency function plays a primary role in showing that the limiting map is nontrivial. On the other hand the properties of the center manifolds $\mathcal{M}^\pm$ enter in a fundamental way in showing that the average of the sheets of the limiting $\qhalf$ map is zero on both sides.  

\section{Weighted monotonicity formulae}\label{s:monotonia}

In this section we want to illustrate in a simple situation an idea which, in spite of being elementary, plays a fundamental role in our proof of Theorem \ref{thm:main}: boundary monotonicity formulae can be derived from the arguments of their interior counterparts provided we introduce a suitable weight. 

Let $\Sigma$ be an $(m-1)$-dimensional submanifold of $\mathbb R^{m+n}$. Consider an $m$-dimensional varifold $V$ in $\mathbb R^{m+n}\setminus \Gamma$ and assume it is stationary in $\mathbb R^{m+n} \setminus \Gamma$. Allard in \cite{AllB} derived his famous monotonicity formula at the boundary under the additional assumption that the density of $V$ has a uniform positive lower bound. His proof consists of two steps: he first derives a suitable representation for the first variation $\delta V$ of $V$ along general vector fields of $\mathbb R^{m+n}$, i.e., vector fields which might be nonzero on $\Gamma$. He then follows the derivation of the interior monotonicity formula, i.e., he tests the first variation along suitable radial vector fields. His proof needs the lower density assumption in the first part and although the latter can be removed (cf. \cite{AllPer}), the resulting argument is rather laborious. 

We introduce here varifolds which are stationary along ``tangent fields'':

\begin{definition}\label{d:stationary}
Consider an $m$-dimensional varifold $V$ in an open set $U \subset \mathbb R^{m+n}$ and let $\Gamma$ be a $k$-dimensional $C^1$ submanifold of $U$. We say that $V$ is stationary with respect to vector fields tangent to $\Gamma$ if
\begin{equation}
\delta V (\chi) =0 \qquad \mbox{for all $\chi\in C^1_c (U, \mathbb R^{m+n})$ which are tangent to $\Gamma$.}
\end{equation} 
\end{definition}

Clearly, when $k=m-1$, the condition above is stronger than that used by Allard in \cite{AllB}, where $\chi$ is assumed to {\em vanish} on $\Gamma$. On the other hand our condition is the natural one satisfied by classical minimal surfaces with boundary $\Gamma$, since the one-parameter family of isotopies generated by $\chi$ maps $\Gamma$ onto itself. When $k > m-1$, the condition is the one satisfied by classical ``free-boundary'' minimal surfaces, namely minimal surfaces with boundary contained in $\Gamma$ and meeting it orthogonally. In the context of varifolds, the latter have been considered by Gr\"uter and Jost in \cite{GJ}, where the two authors derived also an analog of Allard's monotonicity formula. In this section we show how one can take advantage of a suitable distortion of the Euclidean balls to give a (rather elementary) unified  approach to monotonicity formulae in both contexts.

\begin{definition}\label{d:distance}
Assume that $0\in \Gamma$.
We say that the function $d: \mathbb R^{m+n}\to \R$ is a distortion of the distance function adapted to $\Gamma$ if the following two conditions hold:
\begin{itemize}
\item[(a)] $d$ is of class $C^2$ on $\mathbb R^{m+n}\setminus \{0\}$ and $D^j d (x) = D^j |x| + O (|x|^{1-j+\alpha})$ for some fixed $\alpha\in (0,1]$ and for $j=0,1,2$;
\item[(b)] $\nabla d$ is tangent to $\Gamma$. 
\end{itemize}
\end{definition}

The following lemma is a simple consequence of the Tubular Neighborhood Theorem and it is left to the reader.

\begin{lemma}
If $\Gamma$ is of class $C^3$ then there is a distortion of the distance function adapted to $\Gamma$ where the exponent $\alpha$ of Definition \ref{d:distance}(a) can be taken to be $1$. 
\end{lemma}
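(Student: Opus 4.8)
The plan is to build $d$ explicitly using the nearest-point projection onto $\Gamma$ provided by the Tubular Neighborhood Theorem. Since $\Gamma$ is $C^3$, there is a neighborhood $U$ of $0$ on which the nearest-point projection $\pi\colon U \to \Gamma$ is well-defined and of class $C^2$, and the map $x \mapsto (\pi(x), x - \pi(x))$ is a $C^2$-diffeomorphism onto a neighborhood of the zero section of the normal bundle of $\Gamma$. Writing $y = \pi(x) \in \Gamma$ and $z = x - \pi(x)$ for the normal component, one has $|x|^2 = |y|^2 + |z|^2 + 2\,y \cdot z$; since $0 \in \Gamma$, Taylor expansion of $\Gamma$ at $0$ gives $y \cdot z = O(|y|^2 |z|)$ (the tangent space at $0$ is orthogonal to $z$ to first order and $\Gamma$ is $C^3$), so $|x|^2 = |y|^2 + |z|^2 + O(|x|^3)$. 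This suggests the candidate
\[
d(x) := \sqrt{\,g(y)^2 + |z|^2\,}\,,
\]
where $g$ is the intrinsic (geodesic) distance on $\Gamma$ from $0$ to $y = \pi(x)$. The point of using the geodesic distance rather than $|y|$ is that $\nabla d$ will then automatically have the right tangential behavior.

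The key steps, in order, are: (i) verify that $d$ is $C^2$ away from $0$ — this follows since $\pi$ is $C^2$, the normal coordinate $z$ depends $C^2$-smoothly on $x$, and $g(\cdot)^2$ is a $C^2$ function of $y$ on $\Gamma \setminus \{0\}$ (indeed $C^\infty$ on a $C^3$ manifold away from the cut locus, and smooth near $0$ is not needed since we only claim $C^2$ on $\mathbb R^{m+n}\setminus\{0\}$); (ii) estimate $d(x) - |x|$ and its first two derivatives. For (ii), since $g(y) = |y| + O(|y|^3)$ (geodesic distance agrees with the chord length up to third order on a $C^3$ curve/submanifold, by the standard expansion of the exponential map), we get $d(x)^2 = |y|^2 + |z|^2 + O(|x|^4) = |x|^2 + O(|x|^3)$, hence $d(x) = |x| + O(|x|^2)$, which is the $j=0$ bound with $\alpha = 1$. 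Differentiating the identity $d^2 = |x|^2 + E$ with $E = O(|x|^3)$, $DE = O(|x|^2)$, $D^2 E = O(|x|)$ — these follow from the chain rule applied to the $C^2$ change of variables and the cubic vanishing of the error — yields $Dd(x) = D|x| + O(|x|^\alpha)$ and $D^2 d(x) = D^2|x| + O(|x|^{\alpha-1})$ with $\alpha = 1$, after dividing by $d \sim |x|$ and controlling the lower-order terms; (iii) check condition (b), that $\nabla d$ is tangent to $\Gamma$ at points of $\Gamma$. On $\Gamma$ we have $z = 0$, and for $x = y \in \Gamma$ the function $d$ restricted to a normal slice has a strict minimum in the normal directions (the $|z|^2$ term), so $\nabla d(y)$ has no normal component; equivalently $\nabla d(y) = \nabla_\Gamma \big(g(\cdot)\big)(y)$, which is by construction the unit tangent to the geodesic from $0$, hence tangent to $\Gamma$. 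Finally one extends $d$ from the tubular neighborhood $U$ to all of $\mathbb R^{m+n}$ by gluing with $|x|$ outside a slightly smaller neighborhood using a cutoff; since $d = |x| + O(|x|^2)$ on the overlap, the glued function still satisfies (a) with $\alpha = 1$, and it coincides with $|x|$ — whose gradient $x/|x|$ is of course not required to be tangent to $\Gamma$ away from $\Gamma$ — outside $U$, while (b) only concerns points on $\Gamma$, all of which lie in $U$ near $0$; away from $0$ along $\Gamma$ one localizes the construction there too and patches.

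The main obstacle I expect is bookkeeping the error terms in the change of variables: one must be careful that the $C^2$-diffeomorphism between Euclidean and Fermi (normal) coordinates does not spoil the $O(|x|^{1-j+\alpha})$ rates, and in particular that the geodesic-distance expansion $g(y) = |y| + O(|y|^3)$ survives two differentiations in $y$ with the correct powers. This is where the hypothesis $\Gamma \in C^3$ is used in an essential (and sharp) way — the exponent $\alpha = 1$ in (a) is precisely what $C^3$ buys, matching the claim of the lemma. All of this is routine but genuinely requires the $C^3$ regularity, which is why the lemma is stated as it is; accordingly the authors leave it to the reader.
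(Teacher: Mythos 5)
Your approach via the Tubular Neighborhood Theorem --- passing to Fermi coordinates $(y,z)=(\pi(x),\,x-\pi(x))$ with $\pi$ the nearest--point projection and defining $d$ as a deformed length of $(y,z)$ --- is the natural one, and your tangency check and the expansion $d^2-|x|^2=O(|x|^3)$ are on the right track. There is, however, a genuine gap in the regularity claim: you take the tangential coordinate to be the \emph{geodesic} distance $g(y)$ on $\Gamma$ and assert that $g(\cdot)^2$ is $C^2$ away from $0$ (indeed you write ``$C^\infty$ \ldots away from the cut locus''). For a merely $C^3$ submanifold this is not justified. The induced metric on $\Gamma$ is only $C^2$, the Christoffel symbols are $C^1$, and the classical theorem on $C^k$ dependence of ODE solutions on initial data then gives only $C^1$ regularity for the exponential map, hence only $C^1$ for $g^2$. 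So $d=\sqrt{g(y)^2+|z|^2}$ need not be $C^2$ on $\mathbb R^{m+n}\setminus\{0\}$ and Definition~\ref{d:distance}(a) can fail for it; in particular the claim that two derivatives of $g(y)-|y|$ carry the stated decay is unsupported at this regularity.

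The gap is easy to close, because the reason you give for preferring $g$ over the chord length $|y|$ --- ``so that $\nabla d$ has the right tangential behavior'' --- is not correct: the chord length works just as well. Set
\[
d(x):=\sqrt{\,|\pi(x)|^2+|x-\pi(x)|^2\,}.
\]
Since $\pi\in C^2$ (from $\Gamma\in C^3$), both $|\pi(x)|^2$ and $|x-\pi(x)|^2$ are $C^2$, hence $d^2\in C^2$ and $d$ is $C^2$ on the tubular neighborhood minus the origin. Condition (b) holds because at $y\in\Gamma$ one has $D\pi(y)=P_{T_y\Gamma}$ (a symmetric projection) and $\nabla\big(|x-\pi(x)|^2\big)(y)=0$, so
\[
\nabla d(y)=\frac{\nabla(d^2)(y)}{2\,d(y)}=\frac{P_{T_y\Gamma}\,y}{|y|}\in T_y\Gamma\,.
\]
For condition (a), note $d^2-|x|^2=-2\,\pi(x)\cdot\big(x-\pi(x)\big)$; since $x-\pi(x)$ is normal to $\Gamma$ at $\pi(x)$ and the normal component of $y\in\Gamma$ near $0$ is $O(|y|^2)$ (as $0\in\Gamma$ and $\Gamma\in C^2$), this is $O(|x|^3)$. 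Differentiating and using $\pi\in C^2$ together with $D\pi(x)=P_{T_{\pi(x)}\Gamma}+O\big(\dist(x,\Gamma)\big)$ gives $D(d^2-|x|^2)=O(|x|^2)$ and $D^2(d^2-|x|^2)=O(|x|)$, whence $D^jd=D^j|x|+O(|x|^{2-j})$ for $j=0,1,2$, i.e.\ $\alpha=1$. Your expansion $g(y)=|y|+O(|y|^3)$ and the estimate $y\cdot z=O(|y|^2|z|)$ are correct as stated, but they become unnecessary once one uses $|\pi(x)|$ in place of $g(\pi(x))$, and that substitution removes the regularity problem entirely.
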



The main point of our discussion is then the argument given below for the following

\begin{theorem}\label{t:monot}
Consider $\Gamma$ and $V$ as in Definition \ref{d:stationary}, assume that $0\in \Gamma$ and that $d$ is a distorted distance function adapted to $\Gamma$. Let $\varphi \in C^1_c ([0,1))$ be a nonincreasing function which is constant in a neighborhood of the origin. If $\alpha$ is the exponent of Definition \ref{d:distance}(a), then there are positive constants $C$ and $\rho$ such that the following inequality holds  for every positive $s< \rho$
\begin{equation}\label{e:differenziale}
\frac{d}{ds} \left[e^{Cs^\alpha} s^{-m}\int \varphi \left({\textstyle{\frac{d(x)}{s}}}\right)\, d\|V\| (x)\right]
\geq - e^{Cs^\alpha} s^{-m} \int \varphi ' \left({\textstyle{\frac{d(x)}{s}}}\right) \left| P_{\pi^\perp} \left(\frac{\nabla d (x)}{|\nabla d (x)|}\right)\right|^2\, dV (x, \pi)\, 
\end{equation} 
(where $P_\tau$ denotes the orthogonal projection on the subspace $\tau$). 
\end{theorem}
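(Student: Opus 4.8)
The plan is to mimic the classical derivation of the interior monotonicity formula — test the first variation against a suitable ``radial'' vector field — but with the Euclidean distance $|x|$ systematically replaced by the distorted distance $d$. Condition (b) of Definition \ref{d:distance} is precisely what makes the resulting field tangent to $\Gamma$, hence admissible in Definition \ref{d:stationary}, while condition (a) is precisely what confines the curvature of $\Gamma$ to error terms of relative size $O(|x|^{\alpha})$, to be reabsorbed at the end through the weight $e^{Cs^{\alpha}}$.

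Concretely, for $s$ small I would feed into $\delta V=0$ the field
\[
X_s(x):=\varphi\Big(\tfrac{d(x)}{s}\Big)\,\nabla\big(\tfrac12 d(x)^2\big)=\varphi\Big(\tfrac{d(x)}{s}\Big)\,d(x)\,\nabla d(x)\,.
\]
It is tangent to $\Gamma$ since $\nabla d$ is, and it is of class $C^1_c$: it is supported in $\{d<s\}$, which is relatively compact in $U$ for $s$ small; it is $C^2$ away from $0$ because $d$ is; and near $0$ it coincides with $\varphi(0)\nabla(\tfrac12 d^2)$ (as $\varphi$ is constant there), which is $C^1$ because Definition \ref{d:distance}(a) yields $D^2(\tfrac12 d^2)=\mathrm{Id}+O(|x|^{\alpha})$, so that $\tfrac12 d^2$ is $C^2$ up to $0$. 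Thus $\int\mathrm{div}_\pi X_s\,dV(x,\pi)=0$, and a direct computation gives
\[
\mathrm{div}_\pi X_s=\varphi\Big(\tfrac ds\Big)\,\mathrm{tr}\!\big(P_\pi D^2(\tfrac12 d^2)\big)+\tfrac1s\,\varphi'\Big(\tfrac ds\Big)\,d\,\big|P_\pi\nabla d\big|^2\,.
\]
Expanding each ingredient via Definition \ref{d:distance}(a) — so $\mathrm{tr}(P_\pi D^2(\tfrac12 d^2))=m+O(|x|^{\alpha})$, $|\nabla d|^2=1+O(|x|^{\alpha})$, and $|x|\le Cd$ on $\supp X_s$, whence $O(|x|^{\alpha})=O(d^{\alpha})$ there — and writing $|P_\pi\nabla d|^2=|\nabla d|^2(1-R)$ with $R:=|P_{\pi^\perp}(\nabla d/|\nabla d|)|^2\in[0,1]$, this becomes
\[
\mathrm{div}_\pi X_s=m\,\varphi\Big(\tfrac ds\Big)+\tfrac1s\,\varphi'\Big(\tfrac ds\Big)\,d\,(1-R)+\mathcal E\,,\qquad
|\mathcal E|\le C|x|^{\alpha}\Big[\varphi\Big(\tfrac ds\Big)+\tfrac1s\big|\varphi'\big(\tfrac ds\big)\big|\,d\,(1-R)\Big]\,.
\]

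Integrating against $V$ and setting $A:=\int\varphi(d/s)\,d\|V\|\ge0$, $B:=\tfrac1s\int|\varphi'(d/s)|\,d\,(1-R)\,dV\ge0$ and $E:=\int\mathcal E\,dV$, stationarity turns into the identity $B=mA+E$. The delicate point is that, with no lower density bound at our disposal, the part of $E$ carrying $\varphi'$ cannot be controlled by $A$ alone; instead, using $|x|\le Cs$ on the support one estimates $|E|\le Cs^{\alpha}(A+B)$ — with the \emph{same} quantity $B$ on the right — so that for $s$ small the identity $B=mA+E$ bootstraps to $B\le C'A$, hence $|E|\le C''s^{\alpha}A$. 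Here it is essential not to discard the factor $1-R$: keeping it is exactly what lets the error term reproduce $B$.

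Finally, splitting $\tfrac1s\varphi'(d/s)\,d\,(1-R)=\tfrac1s\varphi'(d/s)\,d-\tfrac1s\varphi'(d/s)\,d\,R$ and using $\tfrac1s\varphi'(d/s)\,d=-s\,\partial_s[\varphi(d/s)]$ — which identifies $\int\tfrac1s\varphi'(d/s)\,d\,d\|V\|=-s\,A'(s)$, differentiation under the integral being legitimate since $\varphi\in C^1_c$ — the identity rearranges into
\[
\frac{d}{ds}\big[s^{-m}A(s)\big]=-\,s^{-m-2}\!\int\varphi'(d/s)\,d\,R\,dV+s^{-m-1}E\,.
\]
Because $\varphi$ is constant near $0$ one has $\supp\varphi'\subset[t_0,1)$ for some $t_0>0$, so $d\ge t_0 s\ge s^2$ on $\supp\varphi'(d/s)$ as soon as $s\le t_0$; hence $s^{-m-2}d\ge s^{-m}$ there, and — all factors being non-negative — the first term on the right is $\ge-s^{-m}\int\varphi'(d/s)\,R\,dV$. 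With $J(s):=s^{-m}A(s)$ and $|s^{-m-1}E|\le C''s^{\alpha-1}J(s)$ this reads $J'(s)+C''s^{\alpha-1}J(s)\ge-s^{-m}\int\varphi'(d/s)\,R\,dV$; multiplying by the integrating factor $\exp(\tfrac{C''}{\alpha}s^{\alpha})$ and renaming the constant yields exactly \eqref{e:differenziale}. I expect the error-absorption step to be the main obstacle, since it is the one point where the first variation identity must be exploited in a self-improving way; the choice of $X_s$ carrying the extra factor $d$ — which at once regularises $X_s$ at the origin and produces the $-s\,\partial_s$ differential structure — is what makes everything fit together.
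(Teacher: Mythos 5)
Your proof is correct and follows essentially the same strategy as the paper: test the stationarity condition against a radial-like vector field built from the distorted distance $d$, use Definition~\ref{d:distance}(a) to reduce the divergence computation to the flat case up to $O(s^\alpha)$ errors, and absorb those errors through the exponential weight $e^{Cs^\alpha}$.

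The one genuine divergence is your choice of test field. The paper takes
\[
X_s(x)=\varphi\Big(\tfrac{d(x)}{s}\Big)\,d(x)\,\frac{\nabla d(x)}{|\nabla d(x)|^2}\,,
\]
with a normalizing factor $1/|\nabla d|^2$ that you drop. That factor is not cosmetic: with it, the $\varphi'$ part of $\mathrm{div}_\pi X_s$ is \emph{exactly}
\[
\varphi'\Big(\tfrac{d}{s}\Big)\,\frac{d}{s}\,\frac{|P_\pi\nabla d|^2}{|\nabla d|^2}
=\varphi'\Big(\tfrac{d}{s}\Big)\,\frac{d}{s}\Big(1-\big|P_{\pi^\perp}\big(\tfrac{\nabla d}{|\nabla d|}\big)\big|^2\Big)\,,
\]
with no $O(s^\alpha)$ correction, so the only error sits on the $\varphi$ term and is absorbed directly by the weight, with no extra work. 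Your unnormalized field $X_s=\varphi(d/s)\,d\,\nabla d$ instead produces an extra $O(s^\alpha)$ multiplying the $\varphi'$ part (coming from $|P_\pi\nabla d|^2=|\nabla d|^2(1-R)$ with $|\nabla d|^2=1+O(|x|^\alpha)$), and that is precisely why you need the self-improving bootstrap: first $|E|\le Cs^\alpha(A+B)$, then $B=mA+E$ gives $B\le C'A$ for $s$ small, and only then $|E|\le C''s^\alpha A$. Your bootstrap is correct — and your observation that one must retain the factor $1-R$ inside $B$ so that the first variation identity itself controls $B$ is exactly right — but the paper's normalization makes it unnecessary. Aside from this, your remaining steps (regularity of $X_s$ near the origin via $\tfrac12 d^2\in C^2$, the identification $\tfrac1s\varphi'(d/s)\,d=-s\,\partial_s[\varphi(d/s)]$, the lower bound $d\ge t_0 s$ on $\supp\varphi'(d/s)$ to convert $s^{-m-2}d$ into $s^{-m}$, and the final integrating factor) reproduce the paper's derivation of \eqref{e:differenziale} faithfully.
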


Note that if we let $\varphi$ converge to the indicator function of the interval $[0,1)$ we easily conclude that
\[
s\mapsto \Phi (s):= e^{Cs^\alpha} \frac{\|V\| (\{d<s\})}{s^m}\, 
\]
is monotone nondecreasing: indeed, for $\rho> s> r >0$, the difference $\Phi (s)-\Phi (r)$ controls the integral of a suitable nonnegative expression involving $d$ and the projection of $\nabla d/|\nabla d|$ over $\pi^\perp$. When $d(x)=|x|$, namely when $\Gamma$ is flat, the exponential weight disappears (i.e., the constant $C$ might be taken to be $0$), the inequality becomes an equality and (in the limit of $\varphi\uparrow \mathbf{1}_{[0,1)}$) we recover Allard's identity
\[
\frac{\|V\| (\bB_s (0))}{\omega_m s^m} - \frac{\|V\| (\bB_r (0))}{\omega_m r^m} = \int_{\bB_s (0) \setminus \bB_r (0)} \frac{|P_{\pi^\perp} (x)|^2}{|x|^{m+2}} d\|V\| (x)\, .
\] 
In particular, since $d$ is asymptotic to $|x|$, all the conclusions which are usually derived from Allard's theorem (existence of the density and its upper semicontinuity, conicity of the tangent varifolds, Federer's reduction argument and Almgren's stratification) can be derived from Theorem \ref{t:monot} as well. Moreover, the argument given below can be easily extended 
to cover the more general situation of varifolds with mean curvature satisfying a suitable integrability condition. 

\begin{proof}[Proof of Theorem \ref{t:monot}] Consider the vector field
\[
X_s (x) = \varphi \left(\frac{d(x)}{s}\right) d(x) \frac{\nabla d(x)}{|\nabla d(x)|^2}\, .
\]
$X_s$ is obviously $C^1$ on $\mathbb R^{m+n}\setminus \{0\}$ and moreover we have
\begin{align*}
DX_s &=  \varphi \left(\frac{d(x)}{s}\right) \left[
\frac{\nabla d \otimes \nabla d}{|\nabla d|^2} + \frac{d D^2 d}{|\nabla d|^2} - 2d \frac{\nabla d}{|\nabla d|^4} \otimes (D^2 d \cdot \nabla d)\right]
 + \varphi' \left(\frac{d(x)}{s}\right) \frac{d}{s} \frac{\nabla d \otimes \nabla d}{|\nabla d|^2}\, .
\end{align*}
From the above formula, using that $\varphi$ is constant in a neighborhood of the origin and Definition \ref{d:distance}(a), we easily infer 
\[
DX_s (x) = \varphi \left(\frac{d(x)}{s}\right){\rm Id}\, + O (|x|^\alpha). 
\]
In particular $X_s$ is $C^1$, compactly supported in $U$ (provided $s$ is sufficiently small), and tangent to $\Gamma$. Thus 
\[
0 = \delta V (\chi) = \int {\rm div}_\pi\, X_s (p)\, dV (p, \pi) = 0\, .
\] 
Fix next an orthonormal basis $e_1, \ldots , e_m$ of $\pi$ and use Definition \ref{d:distance}(a) to compute
\begin{align*}
{\rm div}_\pi\, X_s &= \sum_{i=1}^m e_i^T \cdot DX \cdot e_i = (m + O (s^\alpha))\varphi \left(\frac{d(x)}{s}\right)+\varphi' \left(\frac{d(x)}{s}\right) \sum_i \frac{|\nabla d \cdot e_i|^2}{|\nabla d|^2}\\
&=(m + O (s^\alpha)) \varphi \left(\frac{d(x)}{s}\right) + \varphi '\left(\frac{d(x)}{s}\right) \left(1- \left|P_{\pi^\perp} \left(\frac{\nabla d}{|\nabla d|}\right)\right|^2 \right)\, .
\end{align*}
Plugging the latter identity in the first variation condition we achieve the following inequality for a sufficiently large constant $C$:
\begin{align*}
 &\int \left(-m  \varphi \left(\frac{d(x)}{s}\right) -\varphi' \left(\frac{d(x)}{s}\right) \frac{d (x)}{s}\right)\, d\|V\| (x) + C \alpha s^\alpha \int \varphi \left(\frac{d(x)}{s}\right) 
d\|V\| (x)\\
 \geq\; & - \int \varphi' \left(\frac{d(x)}{s}\right) \left| P_{\pi^\perp} \left(\frac{\nabla d (x)}{|\nabla d (x)|}\right)\right|^2\, dV (x, \pi)\, .
\end{align*}
Multiplying both sides of the inequality by $ e^{Cs^\alpha} s^{-m-1}$ we then conclude \eqref{e:differenziale}. 
\end{proof}

\bibliographystyle{plain}


\end{document}